\documentclass[reqno]{amsart}

\usepackage[T2C]{fontenc}
\usepackage[utf8]{inputenc}
\usepackage{amsmath}%
\usepackage{amsfonts}%
\usepackage{amssymb}%
\usepackage{lscape}%
\usepackage[matrix,arrow,curve]{xy}%
\usepackage{graphicx}
\usepackage{tabularx}
\usepackage{cite}
\usepackage{url}
\usepackage{textcase} 

\textwidth=13.75cm

\hoffset -7mm
\voffset -10mm




\numberwithin{equation}{section}
\theoremstyle{plain}
\newtheorem{theorem}{Theorem}

\newtheorem{lemma}{Lemma}[section]
\newtheorem{propos}{Proposition}
\theoremstyle{definition}

\newtheorem{remark}{Remark}

\def\CC{\mathbb C}

\begin{document}
\renewcommand{\refname}{Bibliography}

\title%
{On germs of mappings $\CC^2\to\CC^2$} 
\author{S.\,Yu.~Orevkov}
\address{IMT, l'universit\'e de Toulouse, Toulouse, France}
\email{orevkov@math.univ-toulouse.fr}
\address{Steklov Math. Institute, RAS, Moscow, Russia}

\begin{abstract} We describe germs of mappings $(\mathbb{C}^2,0) \to (\mathbb{C}^2,0)$
ramified along a germ of irreducible curve whose image is of the form $x^p=y^q$.
\end{abstract}

\date{}

\maketitle



\def\C{\mathbb C}
\def\R{\mathbb R}
\def\CP{\mathbb{CP}}
\def\Z{\mathbb Z}
\def\Q{\mathbb Q}
\def\wt{\widetilde}
\def\wb{\overline}
\def\whC{\widehat{\C}}
\def\GCD{\text{\rm gcd}}

\let\ll=\ell

\def\supp{\operatorname{supp}}
\def\mult{\operatorname{mult}}

This paper is an addendum to Kulikov's paper \cite{refK}.
Theorem 4 in \cite{refK} describes germs of mappings $(\mathbb{C}^2,0) \to (\mathbb{C}^2,0)$
which have quadratic ramification along an irreducible germ of curve whose image has one
characteristic pair.
In the present paper we give another proof of this result (based on the techniques from \cite[\S1]{refJCI})
and generalize it to an arbitrary order of ramification.

Beside that, the extra property (see \cite[п.~0.1]{refK}) is proven in \S\ref{sect.extra} for a larger class of germs.
Hence (see \cite[Corollary 1]{refK}%
\footnote{Theorem~5 in the arxiv version of the paper \cite{refK}.}%
) the results of \cite{refK24} extend to this class. These results generalize
the Chisini conjecture on sufficient conditions for a surface $S$ to be uniquely determined by the image of the
ramification curve of a regular mapping $S\to\CP^2$.

I am grateful to Vik.\,S.~Kulikov, M.\,G~Zaidenberg, and A.\,K.~Zvonkin for useful discussions and advise
for improving the manuscript.


\section{The main context}\label{context}
Let $U$ be an open ball in $\C^2$ centered at the origin and $C$ be
a curve $u^{d_1}=v^{d_2}$, $\GCD(d_1,d_2)=1$, $\min(d_1,d_2)>1$.
Let $F:\wt U\to U$ be a covering of a finite degree $N$, branched over $C$.
Let $\wt C\subset\wt U$ be the singular locus of $F$ (the set of points
where $F$ is not a local diffeomorphism).
Suppose that the following conditions hold.
\begin{itemize}
\item[(A1)] $\wt U$ is a domain in $\C^2$, the mapping $F$ is analytic, and $F(0)=0$;
\item[(A2)] the restriction of $F$ to each irreducible component of $f^{-1}(C)$ 
            (in particular, to $\wt C$) is bijective.
\end{itemize}
Let $n$ be the order of ramification of $F$ along $\wt C$.
This means that the divisor of the Jacobian of $F$ is $(n-1)\wt C$.

Let $f:\wt X\to X$ be a
{\it simple normal crossing model} (SNC-model) of $F$ in the sense that
$\sigma:X\to U$ and $\wt\sigma:\wt X\to\wt U$ is a composition of blowups of the origin
and its infinitely near points,
$\sigma^{-1}(C)$ is a simple normal crossing curve (SNC-curve), and $f:\tilde X\to X$
is a holomorphic mapping such that $F\circ\wt\sigma = \sigma\circ f$.
We assume that the SNC-model is {\it minimal}, i.e. $E:=\sigma^{-1}(0)$ does not contain $(-1)$-curves
whose valence in $\sigma^{-1}(C)$ is  $1$ or $2$
and $\wt E:=\wt\sigma^{-1}(0)$  does not contain $(-1)$-curves contracted by $f$ to a point.

It is well known that a minimal SNC-model exists and is unique up to equivalence.
It can be obtained as follows. First, we construct an embedded SNC-resolution of the singularity of $C$,
and then resolve the indeterminacy points of $F$ by blowups on $\wt U$.
We keep the notation $C$ and $\wt C$ for the proper transforms of $C$ and $\wt C$ on $X$ and $\wt X$ respectively.

Given a compact SNC-curve $D=D_1+\dots+D_k$, where the $D_i$ are irreducible, denote
$$
      d(D) = \det\|-D_i\cdot D_j\|_{i,j=1,\dots,k}.
$$
We say that $D$ is a {\it rational linear chain}, if all the $D_i$ are rational,
$D_i\cdot D_{i+1}=1$ for all $i$, and $D_i\cdot D_j=0$ when $|i-j|>1$.
Then we call $D_1$ and $D_k$ the {\it ends} of $D$.

Easy to see that
$E$ is a rational linear chain. Let $E_0$ be the component of $E$ that intersects $C$, and let $E_1$ and $E_2$
be the closures of connected components of $E\setminus E_0$ (they are also rational linear chains).
Denote the intersection points as follows: $p_0=E_0\cap C$, $p_i=E_0\cap E_i$, $i=1,2$.
It is well known (see, e.g., \cite[ch.~5]{refEN}) that $d(E_i)=d_i$, $i=1,2$
(recall that $C=\{u^{d_1}=v^{d_2}\}$).
This fact also immediately follows from \cite[Prop.~1.5(d)]{refJCI} applied to the toric resolution
(see, e.g., \cite[\S8.2]{refAVG}) of the singularity of $C$.


\section{Belyi functions and Zannier's theorem}
Denote the Riemann sphere $\C\cup\{\infty\}$ by $\whC$.
We need the following result.

\begin{theorem}\label{th.DZ} {\rm(Zannier \cite[Theorem 1]{refZ}.)}
Let $\alpha=(\alpha_1,\dots,\alpha_k)$ and $\beta=(\beta_1,\dots,\beta_l)$
be two tuples of positive integers such that    $\sum \alpha_i = \sum\beta_j = N$,
    $n:=k+l-1\le N$,  and
    $\GCD(\alpha_1,\dots,\alpha_k,\,\beta_1,\dots,\beta_l) = 1$.
Then there exists a rational function $g:\whC\to\whC$ with three critical values%
\footnote{Such functions are called {\it Belyi functions.}}
with the sequences of multiplicities $\alpha, \beta$, and $(n,1,\dots,1)$
at their preimages.
\end{theorem}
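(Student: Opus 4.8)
The plan is to construct the desired Belyi function combinatorially via its dessin d'enfant (hypermap), reducing the existence statement to a purely topological/graph-theoretic assembly problem. Recall that a rational function $g:\whC\to\whC$ with critical values contained in $\{0,1,\infty\}$ corresponds to a bicolored graph (dessin) embedded in $S^2$: the white vertices have degrees $\alpha_1,\dots,\alpha_k$ (the fibre over $0$), the black vertices have degrees $\beta_1,\dots,\beta_l$ (the fibre over $1$), and the faces have degrees given by the multiplicities over $\infty$, which we want to be $(n,1,\dots,1)$ with $n=k+l-1$. So the concrete task is: build a connected bicolored planar map with $k$ white and $l$ black vertices of the prescribed degrees, with $N$ edges, having exactly one face of degree $n$ and all other faces of degree $1$ (i.e., loops/monogons). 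First I would count faces by Euler's formula: $V-E+\Fa=2$ with $V=k+l$, $E=N$, so $\Fa=N-n+1$; one of them has degree $n$, and the remaining $N-n$ faces must account for a total face-degree $2N-n$, which forces each to be a monogon — consistent, so the arithmetic is not an obstruction.

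The construction I would carry out is to first build the ``big face'': take a tree-like skeleton. Start with a path alternating white and black vertices realizing a spanning structure — more precisely, I would first form a connected bicolored tree $T$ on all $k+l$ vertices. A tree on $k+l$ vertices has exactly $k+l-1 = n$ edges and a single face of degree $2n$ when drawn in the plane. Now I need to raise the white degrees from their tree-degrees up to $\alpha_i$ and likewise the black degrees to $\beta_j$, while adding $N-n$ edges and converting the single face of degree $2n$ into one face of degree $n$ plus $N-n$ monogons. The key local move is: at a vertex $v$, attach a pendant edge to a new vertex of the opposite color which is then joined back — i.e., attach a small loop-like appendage that contributes a monogon. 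Concretely, adding a doubled edge (a bigon degenerating appropriately) or, better, attaching at a white vertex $w$ a pendant black vertex of degree $1$ via a single edge creates no monogon but raises $\deg w$ by $1$ and adds a black vertex; that changes $k,l$, which we don't want. So instead the right move is: choose an existing edge $wb$ on the boundary of the big face and ``slide a monogon in'' — attach at $w$ a new edge to $b$ running parallel, bounding a bigon; but a bigon has degree $2$, not $1$.

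The honest way to get degree-$1$ faces is to use \emph{loops at a single vertex in the hypermap sense}: in the dessin picture a monogon face arises from an edge both of whose sides lie on the same face and which is a ``leaf-loop''. I would therefore realize the extra $N-n$ edges as leaves attached with multiplicity: at a white vertex $w$ of current degree $<\alpha$, attach $\alpha - (\text{current deg})$ new pendant edges whose black endpoints are \emph{the same} existing black vertex $b$, arranged consecutively in the rotation at $w$; consecutive parallel edges $w\!-\!b$ cut off bigons of degree $2$. To instead get monogons I pass to the associated map where each such parallel edge, when both endpoints already have the needed other-color edges interleaved, closes a face of degree $1$. The cleanest formulation: realize $g$ as a composition, or invoke the classification of which degree-sequences over three points are realizable — which is exactly the content of Zannier's theorem, so I cannot assume it. Thus the \textbf{main obstacle} is precisely the careful bookkeeping of the rotation systems so that the surgery adds edges, boosts the two prescribed degree sequences to $\alpha$ and $\beta$, keeps the map connected and planar, and produces exactly one large face of degree $n$ with all others monogons; the $\GCD=1$ hypothesis enters to guarantee connectedness can be achieved (a nontrivial common divisor would force the monodromy group to be imprimitive/intransitive in a way obstructing a single big face together with the degree constraints). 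I expect the proof to proceed by induction on $N-n$: the base case $N=n$ is the tree $T$ above (all faces: one of degree $2n=2N-n$... wait, we need degree $n$ — so even the base case requires the ``folding'' trick that identifies the outer boundary appropriately, and this folding, using $\GCD=1$, is the crux I would need to get right).
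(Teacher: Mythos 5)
First, note that the paper does not prove this statement at all: it is imported verbatim from Zannier, and the text only points to two known proofs (via products of permutations, and via dessins d'enfants in Pakovich--Zvonkin). So the comparison here is with the dessins proof you are implicitly reconstructing. Your sketch heads in the right direction but contains a genuine gap, which you in fact acknowledge yourself: the combinatorial object is never constructed, and the final sentence concedes that the ``folding'' needed even for the base case is missing. As written this is a description of a strategy, not a proof.

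The concrete missing idea is twofold. (1) You are miscounting face degrees. For a dessin, the multiplicity of the pole inside a face equals \emph{half} the number of edge-sides on its boundary, so a bigon cut off by two consecutive parallel edges between a white and a black vertex already corresponds to a \emph{simple} pole; the ``monogons'' you are chasing do not need to exist. The right object is a weighted bicolored plane tree: a plane tree with $k$ white and $l$ black vertices (hence exactly $n=k+l-1$ edges), each edge carrying a positive integer weight, such that the weighted degrees are $\alpha_1,\dots,\alpha_k$ and $\beta_1,\dots,\beta_l$. Replacing a weight-$w$ edge by $w$ parallel arcs creates $w-1$ faces each containing a simple pole, and the outer face of the tree has $2n$ sides, i.e.\ a pole of order exactly $n$; the total is $N$ edges and pole multiplicities $(n,1,\dots,1)$, as required. (2) Even with this dictionary, the theorem reduces to the existence of such a weighted tree with the prescribed weighted degree sequences, which is a nontrivial combinatorial statement (this is precisely Theorem~3.3 of Pakovich--Zvonkin) and is where the hypothesis $\gcd(\alpha_1,\dots,\beta_l)=1$ actually enters; note that for a fixed tree the edge weights are \emph{uniquely determined} by the vertex sums (the incidence system has rank $n$), so the issue is choosing a tree for which the solution is positive. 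Your proposed explanation of the gcd condition via imprimitivity of the monodromy is not an argument and does not substitute for this step. Until (1) and (2) are supplied, the proof is incomplete.
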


This theorem is proven in \cite{refZ} in terms of products of permutations (see also \cite{refB}).
A simple and elegant proof based on ``dessins d'enfants'' is given in
\cite[Theorem 3.3]{refPZ1}, \cite[Theorem 3.3]{refAPZ}.

\begin{remark}\label{rem.PZ}
In \cite{refPZ1}, Pakovich and Zvonkin found all the pairs
$(\alpha,\beta)$ in Theorem~\ref{th.DZ} which define $g$ uniquely up
to automorphisms of $\whC$. In \cite{refPZ2} they found an explicit form of $g$
for each such pair.
\end{remark}

\begin{remark}\label{rem.DZ}
Under the hypothesis of Theorem \ref{th.DZ}, one can choose coordinates on $\whC$ such that
$0,1,\infty$ are the critical values, $(n,1,...,1)$ are the multiplicities at
$g^{-1}(\infty)$, and $n$ is the multiplicity of $\infty$. Then $g=P/Q$ where
$P(t)=\prod(t-a_i)^{\alpha_i}$, $Q(t)=\prod(t-b_i)^{\beta_i}$, and $\deg(P-Q)=N-n$.
Such pairs of polynomials $(P,Q)$ are called in \cite{refAPZ, refPZ2} {\it Davenport--Zannier pairs} (DZ-pairs).
\end{remark}


\section{The main lemma: an application of results from \cite{refJCI}}

The definition and statements of results from \cite[\S1]{refJCI} are not presented here,
because \S1 of that paper is completely independent of the rest of the paper, moreover,
the most of its content are definitions, notation, and statements.

\begin{lemma}\label{lem1}
{\rm 
     (See Fig.~\ref{fig.th1}.)} 
There exists an irreducible component $\wt E_0$ of $\wt E$ such that:
\begin{itemize}
\item[\rm(i)] $\wt E_0$ is the closure of $f^{-1}(E\setminus(E_1\cup E_2\cup C))$.
\item[\rm(ii)] $f^{-1}(E_i)$, $i=1,2$, is a union of disjoint rational linear chains
      $\wt E_i^{(1)},\dots,\wt E_i^{(\ll_i)}$. Each chain $\wt E_i^{(j)}$ transversally intersects
      $\wt E_0$ at a point $p_i^{(j)}$ which belongs to one of the ends of $\wt E_i^{(j)}$.
\item[\rm(iii)] $f^{-1}(p_0)$ is a finite set of points, one of which is
$\wt p_0:=\wt C\cap\wt E_0$ (recall that $p_0 =  E_0\cap C$).
\item[\rm(iv)] $f$ is not ramified along $\wt E_0$.
\item[\rm(v)]\label{(v)} The numbers $d_i^{(j)}:=d\big(\wt E_i^{(j)}\big)$ are positive,
pairwise coprime, and at most two of them are greater than $1$;
\item[\rm(vi)] The restriction $f|_{\wt E_0}$ is a covering of degree $N$ ramified at
      the points $p_i^{(j)}$ 
      and $\wt p_0$ {\rm(}these points are defined in {\rm(ii), (iii))}.
      Denote the order of ramification of $f|_{\wt E_0}$ at these points by $m_i^{(j)}$ and $m_0$ respectively.
      Recall that $n$ is the order of ramification of $f$ along $\wt C$.
Then:
      \begin{equation}\label{eqDeg}
          m_1^{(1)}+\dots+m_1^{(\ll_1)} = m_2^{(1)}+\dots+m_2^{(\ll_2)} = N, \quad m_0=n,
      \end{equation}
      \begin{equation}\label{eqRH}
          \ll_1 + \ll_2 = n+1,
      \end{equation}
      \begin{equation}\label{eqBr}
          m_i^{(j)} d_i^{(j)} = d_i, \qquad i=1,2,\quad j=1,\dots,\ll_i,
      \end{equation}
      \begin{equation}\label{eqDelta}
          N \left(d_1^{(1)}\dots d_1^{(\ll_1)}\right)\left(d_2^{(1)}\dots d_2^{(\ll_2)}\right) = d_1 d_2.
      \end{equation}
\end{itemize}

\end{lemma}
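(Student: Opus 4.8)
The plan is to study the restriction of $f$ to the exceptional curve over $0$, using the SNC-model structure and the blow-up combinatorics recalled in Section~\ref{context}. Since $E$ is a rational linear chain with distinguished component $E_0$ meeting $C$ at $p_0$ and the two subchains $E_1,E_2$ attached to $E_0$ at $p_1,p_2$, I would first analyze $f^{-1}(E_0)$. Because $F$ is branched only over $C$ and $f$ is an SNC-model, the only components of $\wt E$ dominating $E_0$ must map to it as a (possibly ramified) covering; minimality of the model and condition (A2) (applied to $\wt C$, which sits over $E_0$) force there to be a \emph{unique} such component, which we call $\wt E_0$, giving (i). That $f$ is unramified along $\wt E_0$ — statement (iv) — follows because the Jacobian divisor of $F$ is $(n-1)\wt C$ and $\wt C\ne\wt E_0$, so no exceptional component other than (proper transforms related to) $\wt C$ carries ramification of $f$; one checks $\wt E_0$ is not among the blow-up components where multiplicity is created. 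Statement (iii) is then immediate: $p_0=E_0\cap C$ pulls back to a finite set meeting $\wt C$, and $\wt p_0:=\wt C\cap\wt E_0$ is the distinguished preimage where the proper transform of $\wt C$ meets $\wt E_0$.

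Next I would handle (ii) and (v). Over a small bidisc neighborhood of $p_i$ in $X$, the curve $E_i\cup E_0$ looks like two coordinate axes, and $f$ is a branched covering unramified off $E_0$ near there; hence each connected component of $f^{-1}$ of this neighborhood is again a bidisc and $f^{-1}(E_i)$ breaks into disjoint pieces, each a rational linear chain $\wt E_i^{(j)}$ meeting $\wt E_0$ transversally at an end. This is exactly the local picture of resolving a cyclic quotient singularity / a toric morphism, so the combinatorics of continued fractions apply: the determinant $d(\wt E_i^{(j)})$ is computed from the local branching data. For (v), the key input is \cite[Prop.~1.5]{refJCI} (or the Enriques–Neron language of \cite[ch.~5]{refEN}): the $d_i^{(j)}$ are the ``multiplicity drops'' along the $\ell_i$ branches over $p_i$, and since $C$ is irreducible with $\GCD(d_1,d_2)=1$ the splitting type over $p_i$ is governed by a coprimality condition, forcing the $d_i^{(j)}$ to be pairwise coprime with at most two exceeding $1$ — this last point is the arithmetic heart and I expect it to be the main obstacle, as it requires extracting the precise numerical constraints from the results of \cite[\S1]{refJCI} rather than just their qualitative content.

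Finally, (vi) collects the Riemann–Hurwitz bookkeeping for the degree-$N$ covering $g:=f|_{\wt E_0}\colon\wt E_0\to E_0\cong\whC$. The ramification of $g$ is concentrated at the points $p_i^{(j)}$ (over $p_i$) and $\wt p_0$ (over $p_0$): equation~\eqref{eqDeg} is just ``fibers have size $N$'' together with the identification $m_0=n$, which comes from matching the order of ramification of $f$ along $\wt C$ (which is $n-1$ as a divisor coefficient, i.e.\ ramification order $n$) with the local ramification of $g$ at $\wt p_0$, using that $\wt C$ meets $\wt E_0$ transversally at $\wt p_0$. Equation~\eqref{eqRH} is the Riemann–Hurwitz formula for $g\colon\whC\to\whC$: $-2 = N(-2) + \sum(\text{ramification})$, where the total ramification over $p_1$ is $N-\ell_1$, over $p_2$ is $N-\ell_2$, and over $p_0$ is $n-1$, giving $2N-2 = (N-\ell_1)+(N-\ell_2)+(n-1)$, i.e.\ $\ell_1+\ell_2=n+1$. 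Equation~\eqref{eqBr} relates the determinant of a subchain to its image chain via the local ramification index $m_i^{(j)}$ (the chain $\wt E_i^{(j)}$ is, up to the toric dictionary, the resolution chain scaled by $m_i^{(j)}$, so $m_i^{(j)}d_i^{(j)}=d_i$, using $d(E_i)=d_i$ from Section~\ref{context}). Equation~\eqref{eqDelta} then follows by multiplying \eqref{eqBr} over all $j$ for $i=1,2$ and using $\prod_j m_i^{(j)}=$ (product of ramification indices over $p_i$), which by the structure of the covering and \eqref{eqDeg} contributes a factor $N$ once the two are combined; more precisely, from \eqref{eqBr} we get $d_i^{\,\ell_i} = (\prod_j m_i^{(j)})\prod_j d_i^{(j)}$, and comparing the two values $i=1,2$ against the global degree identity (the pullback of the divisor $C$ has total multiplicity controlled by $d_1d_2$ and $N$) yields \eqref{eqDelta} after cancellation. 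I would write the cancellation out explicitly, but it is routine linear algebra on the intersection matrices once the structural statements (i)–(v) are in place.
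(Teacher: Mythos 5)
Your overall architecture (local toric analysis near the chains, Riemann--Hurwitz for \eqref{eqRH}, determinant bookkeeping for \eqref{eqBr}, \eqref{eqDelta}) matches the paper's, and your derivation of \eqref{eqRH} is exactly the intended one. But the proposal has genuine gaps precisely at the structural heart of the lemma. For (i) and (iv) you offer no real argument: condition (A2) concerns the components of $F^{-1}(C)$ (which lie over $C$, not over $E_0$ --- $\wt C$ maps to $C$), and the fact that the divisor of the Jacobian of $F$ is $(n-1)\wt C$ says nothing a priori about the ramification index of $f$ along a component of $\wt E$ lying over $E_0$, nor about there being only one such component. The missing input is Mumford's computation: for a tubular neighborhood $V$ of a negative-definite rational linear chain $D$ one has $\pi_1(V\setminus D)\cong\Z_{d(D)}$, so a proper map $\wt V\to V$ that is a covering off $D$ and has no $(-1)$-curves upstairs is determined by its degree and is equivalent to an explicit toric (cyclic-quotient) model; statements (i)--(iv), including the uniqueness of $\wt E_0$ and the absence of ramification along it, are then \emph{read off} from that model, not deduced from (A2).

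Two further points. For (v) you explicitly concede that pairwise coprimality and ``at most two $d_i^{(j)}>1$'' is ``the main obstacle'' and do not prove it; the paper's argument is a short induction on the number of blowups using the invariance $d(\sigma^{-1}(D))=d(D)$ under blowup of a point on an SNC-curve. For \eqref{eqDelta}, your proposed route --- multiply \eqref{eqBr} over $j$ and compare with a ``global degree identity'' --- is circular as written: the identity you would need, namely $N\,d_1^{\ell_1-1}d_2^{\ell_2-1}=\prod_j m_1^{(j)}\prod_j m_2^{(j)}$, is equivalent to \eqref{eqDelta} itself and is exactly the content of the cited \cite[Prop.~1.2]{refJCI} (a relation between the covering degree and the orders of the local fundamental groups of the chain neighborhoods upstairs and downstairs); it is not ``routine linear algebra on the intersection matrices'' given only (i)--(v).
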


\begin{figure}
	\centering
		\includegraphics[width=46mm]{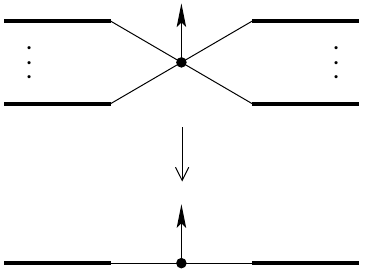}
		\put(-170,87){\small$(m_1^{(1)}:1)$}
		\put(-173,58) {\small$(m_1^{(\ell_1)}:1)$}
		\put(2,87)    {\small$(m_2^{(1)}:1)$}
		\put(2,58)    {\small$(m_2^{(\ell_2)}:1)$}
		\put(-111,94){\small$\wt E_1^{(1)}$}
		\put(-111,65) {\small$\wt E_1^{(\ell_1)}$}
		\put(-37,94)  {\small$\wt E_2^{(1)}$}
		\put(-37,65)  {\small$\wt E_2^{(\ell_2)}$}
		\put(-70,60)  {\small$\wt E_0$}
		\put(-63,90)  {\small$\wt C$}
		\put(-73,38)  {\small$f$}
		\put(-63,18)  {\small$C$}
		\put(-115,6)  {\small$E_1$}
		\put(-78,6)   {\small$E_0$}
		\put(-25,6)   {\small$E_2$}
		\put(-216,72) {\small$\wt E:$}
		\put(-216,4)  {\small$E:$}
	\caption {Graphs of $E$ and $\wt E$.
	Bold lines represent linear chains;
	$m_i^{(j)}$ is the degree of the restriction of $f$ to the $j$-th connected component of the
	preimage of a small neighborhood of $E_i$. 
	}
	\label{fig.th1}
\end{figure}

\begin{proof}
(i)--(iv).
If $D$ is a rational linear chain with negative definite intersection form and $V$ is the union of small
tubular neighborhoods of its components, then $\pi_1(V\setminus D)\cong\Z_{d(D)}$ (see \cite[\S I]{refM}).
Hence a proper analytic mapping of a smooth surface $\wt V\to V$ which is a covering over
$V\setminus D$ is determined up to equivalence by its degree as soon as $\wt V$ does not contain $(-1)$-curves.
It is easy to derive from this fact that all such coverings are equivalent to mappings of toric surfaces
described in \cite[\S 1.4]{refJCI}, whence Conditions (i)--(iv) follow.

Indeed, $D$ is defined by a fun $\Sigma$ which is the primitive subdivision of the cone spanned by
the vectors $(1,0)$ and $(e,d)$, $d=d(D)$, $\GCD(e,d)=1$
(see \cite[Prop.~1.5(e)]{refJCI}). For each divisor
$m$ of $d$, consider the homomorphism $A:\Z^2\to\Z^2$, $(x,y)\mapsto(x,my)$ and define
$\wt\Sigma$ as the primitive subdivision of the fan $A^{-1}(\Sigma)$.
Then $A_*:X_{\wt\Sigma}\to X_{\Sigma}$ is a mapping of toric surfaces, which is a covering 
of degree $m$ over the complement of $D$
(see \cite[Prop.~1.6]{refJCI}).

\smallskip
(v). 
The positivity of $d_i^{(j)}$ follows from the negative definiteness of the intersection form.
The fact that all of them, maybe except two, are equal to $1$ can be proven by induction on the
number of blowups, since $d(\sigma^{-1}(D))=d(D)$ when $\sigma$ 
is a blowup of a point
on an SNC-curve $D$ (see also \cite[ch.~5]{refEN}).

\smallskip
(vi).
The equation \eqref{eqDeg} is evident;
\eqref{eqRH} follows from the Riemann--Hurwitz formula;
\eqref{eqBr} follows from \cite[Prop.~1.6]{refJCI};
\eqref{eqDelta} follows from \cite[Prop.~1.2]{refJCI}.%
\footnote{There is a misprint in \cite[\S 1.2]{refJCI}: two different numbers are denoted by $n$.
It is corrected in the version of the paper available at
\url{https://www.math.univ-toulouse.fr/~orevkov/jci-e.pdf}.}
\end{proof}

\begin{figure}
	\centering
		\includegraphics[width=95mm]{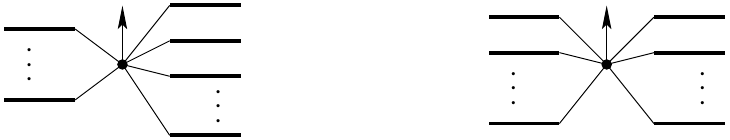}
		\put(-307,26){{\small $l_1$}$\Bigg\{$}
		\put(-293,38){\small $(d_1{:}1)$}
		\put(-293,14){\small $(d_1{:}1)$}
		\put(-252,18){\small $1$}
		\put(-252,44){\small $1$}
		\put(-232,-3){(a)}
		\put(-180,0){\small $(d_2{:}1)$}
		\put(-180,21){\small $(d_2{:}1)$}
		\put(-157,11){$\Bigg\}${\small $l_2$}}
		\put(-180,35){\small $(m_2{:}1)$}
		\put(-180,48){\small $(m_1{:}1)$}
		\put(-201,5){\small $1$}
		\put(-201,26){\small $1$}
		\put(-200,40){\small $k_2$}
		\put(-200,54){\small $k_1$}
		\put(-128,16){{\small $l_1$}$\Bigg\{$}
		\put(-114,28){\small $(d_1{:}1)$}
		\put(-114,4){\small $(d_1{:}1)$}
		\put(-117,43){\small $(m_1{:}1)$}
		\put(-72,9){\small $1$}
		\put(-72,35){\small $1$}
		\put(-74,49){\small $k_1$}
		\put(-52,-3){(b)}
		\put(1,4){\small $(d_2{:}1)$}
		\put(1,28){\small $(d_2{:}1)$}
		\put(-72,9){\small $1$}
		\put(-72,35){\small $1$}
		\put(-74,49){\small $k_1$}
		\put(23,16){$\Bigg\}${\small $l_2$}}
		\put(1,43){\small $(m_2{:}1)$}
		\put(-22,9){\small $1$}
		\put(-22,35){\small $1$}
		\put(-24,49){\small $k_2$}
	\caption {Graph of $\wt E$ in Cases (a) and (b) of Theorem \ref{th.main} (cf.~Fig.~\ref{fig.th1}).
                The labels near the linear chains $\wt E_i^{(j)}$ are $d(\wt E_i^{(j)})$.
	}
	\label{fig.ab}
\end{figure}


\section{The main theorem}

\begin{theorem}\label{th.main} {\rm(I)}.
Suppose that one of the following two cases (a) or (b) takes place.
\smallskip
\begin{itemize}
\item[(a).]  $l_1,k_1,k_2\ge 1$,  $\;l_2\ge 0,\;$ $l_1 k_1 k_2\ge 2,\;$ and $\;\GCD(k_1,k_2) = \GCD(l_1,d_1)=1,\;$
where $d_1=k_1+k_2+l_2 k_1 k_2$.
Then we set
$d_2=l_1 k_1 k_2,\;$ $m_1=l_1 k_2,\;$ $m_2=l_1 k_1,\;$ $N=l_1 d_1,\;$ and $\;n = l_1 + l_2 + 1$.
\smallskip
\item[(b).] $k_1,k_2\ge 1$, $\;l_1,l_2\ge 0$,
$\;k_1+l_2>1,\;$ $\;k_2+l_1>1,\;$ and
$\;\GCD(k_1,k_2) = \GCD(m_1,m_2)=1$, where
$m_1 = k_2 l_2 + 1,\;$ $m_2 = k_1 l_1 + 1$. 
Then we set
$d_1 = k_1 m_1,\;$ $d_2 = k_2 m_2,\;$ $N=m_1 m_2,\;$ and $\;n = l_1 + l_2 + 1$.
\end{itemize}
\smallskip\noindent
Let $g:\whC\to\whC$ 
be a Belyi function (see Theorem \ref{th.DZ}) with critical values
$\infty,1,0$ and ramification orders
$\alpha$, $(n,1,\dots,1)$, $\beta$ respectively, where
$$
    (\alpha;\beta)=\begin{cases}
       (\mathbf d_1;\; m_1,m_2,\mathbf d_2) &\text{в случае (a),}\\
       (m_1,\mathbf d_1;\; m_2,\mathbf d_2) &\text{в случае (b),}\end{cases}
       \qquad
   \lower-6.8pt\hbox{$\mathbf d_i = \underset{l_i}{\underbrace{d_i,\dots,d_i}},$}
$$
and $m_1$, $m_2$, and $n$ are the ramification orders at $\infty$, $0$, and $1$ respectively,
which means that $g(t)=t^{m_2}g_2(t)^{d_2}/g_1(t)^{d_1}$ where $g_i$ is a polynomial of
$l_i$ $(i=1,2)$, $g_1(1)=g_2(1)=1$,
and the numerator of the rational function $g(t)-1$ is a multiple of $(t-1)^n$. 
Set  $G_i(x,y) = g_i(y^{k_2}/x^{k_1})x^{k_1 l_i}$ $(i=1,2)$.

Then the mapping $F:\C^2\to\C^2$, $(x,y)\mapsto(u,v)$ defined by
$$
  u = x^\nu G_1(x,y),\quad v = x^{1-\nu}y\,G_2(x,y), \qquad
  \nu = \begin{cases} 0 &\text{in Case (a),}\\ 1 &\text{in Case (b),}\end{cases}
$$
is a covering of degree $N$, branched over $C=\{u^{d_1}=v^{d_2}\}$
with ramification of order $n$ along $\wt C=\{x^{k_1}=y^{k_2}\}$.
The papameters of the SNC-model of $F$ in the notation of Lemma~\ref{lem1} are, respectively
(see Fig.~\ref{fig.ab}),
\begin{itemize}
\item[(a).]
$\ell_1=l_1,\;\; \ell_2=l_2+2;\quad
       d_2^{(j)}=k_j, \;\; m_2^{(j)}=m_j\:\;(j=1,2);$
\par\noindent     
$d_1^{(j)}=1, \;\; m_1^{(j)}=d_1 \;\; (1\le j\le\ell_1);\quad
      d_2^{(j)}=1, \;\; m_2^{(j)}=d_2 \;\; (3\le j\le\ell_2).$
\smallskip
\item[(b).]
$
      \ell_i = l_i+1, \;\, d_i^{(1)} = k_i,\;\, m_i^{(1)} = m_i,\;\,
       d_i^{(j)}=1,\;\, m_i^{(j)}=d_i\;\, (i=1,2;\;2\le j\le\ell_i). 
$
\end{itemize}

\smallskip
{\rm(II)}. Up to the swapping $d_1\leftrightarrow d_2$, any branched covering satisfying
Conditions (A1) and (A2) is as in Part {\rm(I)} of this theorem.

\end{theorem}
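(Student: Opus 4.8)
The plan is to prove Part (I) by direct computation and Part (II) by combining Lemma~\ref{lem1} with Zannier's theorem. For Part (I), I would start from the explicit formulas for $u$ and $v$ and verify each claimed property in turn. First, one computes the Jacobian of $F$; writing $u=x^\nu G_1$, $v=x^{1-\nu}yG_2$ with $G_i=g_i(t)x^{k_1 l_i}$ where $t=y^{k_2}/x^{k_1}$, the partial derivatives organize themselves in terms of $g_i(t)$, $g_i'(t)$, and powers of $x$ and $y$. The key observation is that, because of the relation defining the Belyi function $g=t^{m_2}g_2^{d_2}/g_1^{d_1}$ with $g-1$ vanishing to order $n$ at $t=1$, the Wronskian-type expression $g_1 g_2' - g_2 g_1'$ (and related combinations) factors through powers of $g_1$, $g_2$, and $(t-1)^{n-1}$; after substituting $t=y^{k_2}/x^{k_1}$ and clearing denominators this shows the Jacobian divisor is exactly $(n-1)\wt C$ with $\wt C=\{x^{k_1}=y^{k_2}\}$. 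Next, to see that the image is $C=\{u^{d_1}=v^{d_2}\}$: substitute the formulas into $u^{d_1}/v^{d_2}$; the powers of $G_1,G_2$ combine with the powers of $x$ to recover $g_1^{d_1}/(t^{m_2}g_2^{d_2})\cdot(\text{monomial})= (\text{const})/g(t)\cdot(\text{monomial})$, and one checks the monomial factor is trivial using the arithmetic identities among $d_1,d_2,k_i,m_i,l_i,N$ in Cases (a) and (b) (e.g. $d_2=l_1k_1k_2$, $N=l_1d_1$, etc.). The degree $N$ is then computed by counting preimages of a generic point, or by a local degree argument at the generic fibre. Finally, to identify the SNC-model parameters, I would follow the recipe in \S\ref{context}: construct the toric (weighted blowup) resolution of $C=\{u^{d_1}=v^{d_2}\}$ on the target, and trace through the preimage under $F$ using the toric description in \cite[\S1.4--1.6]{refJCI}; the chains $\wt E_i^{(j)}$ with the stated $d_i^{(j)}$ and $m_i^{(j)}$ appear as the exceptional configuration, and one verifies equations \eqref{eqDeg}--\eqref{eqDelta} directly against the numerology of Cases (a),(b).

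For Part (II), the plan is essentially to run Lemma~\ref{lem1} in reverse. Given any branched covering $F$ satisfying (A1),(A2), apply Lemma~\ref{lem1} to obtain the component $\wt E_0$, the chains $\wt E_i^{(j)}$ with invariants $d_i^{(j)}$, and the covering $f|_{\wt E_0}:\whC\to\whC=E_0$ of degree $N$, ramified over the three points $p_1^{(j)}$, $p_2^{(j)}$, $\wt p_0$ on $E_0$ — wait, more precisely ramified over the two points $p_1:=E_0\cap E_1$, $p_2:=E_0\cap E_2$, and $p_0:=E_0\cap C$, since all the $p_i^{(j)}$ for fixed $i$ lie over $p_i$. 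By \eqref{eqBr}, $d_i^{(j)}$ divides $d_i$, and by Lemma~\ref{lem1}(v) at most two of the $d_i^{(j)}$ exceed $1$; a short case analysis on \emph{where} those (at most two) large invariants sit — both over the same $p_i$, or one over each — produces exactly the dichotomy (a)/(b). In the first case (both large ones over $p_2$, say) one gets the shape of Fig.~\ref{fig.ab}(a); in the second case (one over each) the shape of Fig.~\ref{fig.ab}(b). The integers $k_i,l_i$ are read off as the numbers of chains of each type, and the coprimality constraints $\GCD(k_1,k_2)=1$, etc., come from Lemma~\ref{lem1}(v) (pairwise coprimality of the $d_i^{(j)}$) together with the relation $\GCD(d_1,d_2)=1$ and \eqref{eqDelta}. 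The ramification data of $f|_{\wt E_0}$ at $p_0,p_1,p_2$ is precisely a triple $(\alpha;(n,1,\dots,1);\beta)$ of the form in Theorem~\ref{th.main}, so by (the existence part of) Theorem~\ref{th.DZ} / Remark~\ref{rem.DZ} the map $f|_{\wt E_0}$ is given by a Belyi function $g=P/Q$ of the stated form, and Remark~\ref{rem.DZ} supplies the polynomials $g_1,g_2$. It then remains to show that $F$ itself is \emph{reconstructed} from $g$ by the formulas of Part (I): near the generic point of $\wt C$ the map is determined (up to the toric normal form of \cite[\S1.4]{refJCI}) by $f|_{\wt E_0}$ and the chain data, and one checks the toric gluing assembles into exactly $(x,y)\mapsto(x^\nu G_1, x^{1-\nu}yG_2)$ — i.e. uniqueness of the covering with prescribed SNC-model, using $\pi_1$ of chain neighborhoods as in the proof of Lemma~\ref{lem1}(i)--(iv).

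I expect the main obstacle to be the reconstruction step in Part (II): showing that the local toric models over the various strata of $E$, pieced together, force $F$ globally to have the closed form in Part (I), rather than merely to be ``toric-like'' near each exceptional curve. Concretely, one must upgrade ``$f|_{\wt E_0}$ is a given Belyi map and each $f^{-1}(E_i)$ is a disjoint union of chains with prescribed $d_i^{(j)}$'' to ``$f$ is the \emph{specific} monomial-times-$G_i$ map,'' which requires controlling the gluing data between the neighborhood of $\wt E_0$ and the neighborhoods of the chains. The tool for this is the rigidity statement used in the proof of Lemma~\ref{lem1}(i)--(iv): a proper analytic map onto a neighborhood of a negative-definite rational chain, étale over the complement, is determined up to equivalence by its degree provided the source has no $(-1)$-curves; combined with the minimality of the SNC-model this pins down $F$ after $\wt\sigma$ is blown down, and one only needs to check that the blow-down of the toric model described in Part (I) produces the asserted formulas — a finite computation. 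A secondary, purely bookkeeping difficulty is making the two case analyses (the placement of the ``large'' $d_i^{(j)}$ and the resulting constraints) exhaustive and matching them cleanly to the hypotheses $l_1k_1k_2\ge2$ resp. $k_1+l_2>1,\ k_2+l_1>1$ that rule out degenerate (non-minimal or reducible) situations.
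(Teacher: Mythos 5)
Your plan follows essentially the same route as the paper: for Part (I), the Jacobian computation exploiting that $g$ has only the critical values $0,1,\infty$ with an order-$n$ zero of $g-1$ at $t=1$ (the paper's $h=m_2g_1g_2-d_1tg_1'g_2+d_2tg_1g_2'=\mu(t-1)^{n-1}$ is your ``Wronskian-type'' factorization), the degree count at a generic point, and the toric identification of the SNC-model via \cite[\S1.4]{refJCI}; for Part (II), the case analysis on where the at most two invariants $d_i^{(j)}>1$ from Lemma~\ref{lem1}(v) sit, combined with \eqref{eqDeg}--\eqref{eqDelta} and the rigidity of coverings over negative-definite chains. The only small correction: in Part (II) Zannier's theorem is not needed to realize $f|_{\wt E_0}$ (it is already a Belyi function by Lemma~\ref{lem1}(vi)); the existence statement is what makes Part (I) nonvacuous for every admissible parameter set.
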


\begin{proof}
(I).
The only critical values of $g$ are $0,1,\infty$, 
and the function $g(t)-1$ vanishes with multiplicity $n$ at $t=1$ while the other zeros of it are simple.
It follows that
$g' = \lambda\, t^\alpha g_1^\beta g_2^\gamma(t-1)^{n-1}$, where $\alpha,\beta,\gamma\in\Z$, $\lambda\in\C^{\times}$.
Then, since
$$
   g' = t^{m_2-1}g_1^{-d_1-1}g_2^{d_2-1}h, \quad\text{where}\quad h = m_2g_1g_2-d_1tg'_1g_2+d_2tg_1g'_2
$$
and $h$ is coprime with $tg_1g_2$, we conclude that
$h(t) = \mu (t-1)^{n-1}$, $\mu\in\C^{\times}$.
A computation shows that, in both cases (a) and (b), we have
$$
   J(F):=\frac{\partial(u,v)}{\partial(x,y)} = x^{k_1(n-1)}h\Big(\frac{y^{k_2}}{x^{k_1}}\Big),
   \quad\text{whence}\quad J(F) = \mu (y^{k_2} - x^{k_1})^{n-1},
$$
and $F(s^{k_2},s^{k_1}) = (s^{d_2},s^{d_1})$,
i.e. $F$ is a covering branched over $\{u^{d_1}=v^{d_2}\}$
with ramification order $n$ along $\{x^{k_1}=y^{k_2}\}$.
It is easy to see that the point $(1,0)$ has $N$ preimages, hence
the degree of $F$ is $N$.

In order to find the parameters of the SNC-model, we decompose
$\sigma$ and $\wt\sigma$ as follows:
$$
        X\overset{   \sigma_2}\longrightarrow X_{   \Sigma}\overset{   \sigma_1}\longrightarrow\C^2, \qquad
    \wt X\overset{\wt\sigma_2}\longrightarrow X_{\wt\Sigma}\overset{\wt\sigma_1}\longrightarrow\C^2,
$$
where $X_\Sigma$ and $X_{\wt\Sigma}$ are toric resolutions of the singularities
(see, e.g., \cite[\S8.2]{refAVG}) of $C$ and $\wt C$.
In the notation of \cite[\S 1.4]{refJCI}, this means that the coordinates $(x,y)$ and $(u,v)$ in $\C^2$ associated
with the cone $\langle e_1,e_2\rangle$, where $e_1,e_2$ is the standard base of $\Z^2$, while $\Sigma$ and $\wt\Sigma$
are minimal primitive subdivisions of the fans $(e_1,e_0,e_2)$ and $(e_1,\wt e_0,e_2)$, where
$e_0=d_2e_1+d_1e_2$ and $\wt e_0=k_2e_1+k_1e_2$. 
The mappings
$\sigma_1$ and $\wt\sigma_1$ are induced by the identity mapping $\Z^2\to\Z^2$.
Then $E_0$ and $\wt E_0$ are the divisors on $X_\Sigma$ and $X_{\wt\Sigma}$ corresponding to the vectors $e_0$ and $\wt e_0$
respectively. 

Let $e_0^+=a_1e_1+a_2e_2\in\Sigma$ and $\wt e_0^+=b_1e_1+b_2e_2\in\wt\Sigma$
be such that $e_0\wedge e_0^+=\wt e_0\wedge\wt e_0^+=1$.
Let $(z,w)$ denote the coordinates on $X_\Sigma$ and $(s,t)$ be those on $X_{\wt\Sigma}$, associated with
the cones $\langle e_0,e_0^+\rangle$ and $\langle\wt e_0,\wt e_0^+\rangle$ respectively.
In these coordinates we have $E_0=\{z=0\}$, $C=\{w=1\}$, $\wt E_0=\{s=0\}$, $\wt C=\{t=1\}$,
hence $w$ is a coordinate on $E_0$ and $t$ is a coordinate on $\wt E_0$. 
The mappings $\wt\sigma_1:(s,t)\mapsto(x,y)$ and $\sigma_1^{-1}:(u,v)\mapsto(z,w)$ has the form
(see \cite[\S1.4.1]{refJCI})
$$
     \begin{cases} x = s^{k_2}     t^{ a_2},\\ y = s^{ k_1} t^{a_1},\end{cases} \qquad
     \begin{cases} z\, = u^{\;\,b_1}\; v^{-b_2},\\ w = u^{-d_1} v^{\;\,d_2}.\end{cases}
$$
Plugging these expressions into the formula of $F$ yields, after simplifications, that
$\sigma_1^{-1}(F(\wt\sigma_1(s,t)))=(s\,\phi(t),g(t))$ in both cases (a) and (b),
where $\phi$ is a certain rational function. In particular, 
$f|_{\wt E_0}$ (in the coordinates $t$ on $\wt E_0$ and $w$ on $E_0$) takes the form $w = g(t)$,
hence it has the required orders of ramification.

\smallskip
(II). Follows from Lemma~\ref{lem1}. Indeed, in virtue of Condition (v),
one of the following cases (a) or (b) takes place up to swapping $d_1\leftrightarrow d_2$.

\smallskip
(a).
$d_1^{(1)}=\dots=d_1^{(\ell_1)}=d_2^{(3)}=\dots=d_2^{(\ell_2)}=1$. Set
$l_1=\ell_1$, $l_2=\ell_2-2$,
$m_j=m_2^{(j)}$, $k_j=d_2^{(j)}$, $j=1,2$. Then
$m_1^{(j)}=d_1$ $(1\le j\le\ell_1)$ and $k_1m_1=k_2m_2=m_2^{(j)}=d_2$ $(3\le j\le\ell_i)$
by \eqref{eqBr}. Since $\GCD(k_1,k_2)=1$ (see (v)), the fact that $k_1m_1=k_2m_2$ implies  
$m_1=ak_2$, $m_2=ak_1$, $a\in\Z$. Thus $d_2=ak_1 k_2$. Further, $N=d_1l_1$ (see \eqref{eqDeg}),
hence \eqref{eqDelta} takes the form $(d_1l_1)(k_1 k_2) = d_1(a k_1 k_2)$, whence $a=l_1$.
Finally, $d_1l_1 = N = m_1+m_2+l_2 d_2 = ak_1+ak_2+l_2ak_1k_2$ (see \eqref{eqDeg}), hence
$d_1 = k_1+k_2+l_2k_1k_2$ because $a=l_1$.

\smallskip
(b).
$d_1^{(2)}=\dots=d_1^{(\ell_1)}=d_2^{(2)}=\dots=d_2^{(\ell_2)}=1$. Set
$k_i=d_i^{(1)}$, $m_i=m_i^{(1)}$, and $l_i=\ell_i-1$, $i=1,2$.  Then
$d_i=k_i m_i$ (see \eqref{eqBr}), hence $\GCD(m_1,m_2)=1$.
Since $N=m_ik_il_i+m_i$ (see \eqref{eqDeg}), the fact that $\GCD(m_1,m_2)=1$ implies
$k_i l_i + 1 = am_{3-i}$, $a\in\Z$, thus $N=am_1m_2$. Hence
\eqref{eqDelta} takes the form $(a m_1 m_2)(k_1 k_2) = d_1 d_2$, whence $a=1$,
which concludes the proof of the theorem.
\end{proof}


\begin{remark}
It is clear that the covering in Case (a) with $k_1=1$ is equivalent to the covering in Case (b) with $k_1=1$ 
and the same $k_2$ (after an evident correction of $l_1$ and $l_2$). The coverings also remain equivalent
after swapping $k_1$ with $k_2$ in Case (a), and after swapping all the subscripts 1 and 2 in Case (b).
One can get rid of this ambiguity by imposing the following additional restrictions:
$k_1\le k_2$ (in both cases) and $k_1\ge 2$ in Case (a).
It is easy to check that all the covering became pairwise non-equivalent under these restrictions.
The covering type ((a) or (b)) and the parameters $k_1,k_2,l_1,l_2$ are also uniquely determined
by the numbers $d_1$, $d_2$, $N$, and $n$ (for example, Type (b) occurs when $d_1$ divides $N$).
\end{remark}

\begin{remark}\label{rem.unique}
All the cases when $f|_{\wt E_0}$ is determined by the ramification orders uniquely up to equivalence
(and hence $F$ is determined by  $d_1$, $d_2$, $N$, and $n$) are presented in Table~1.
This fact can be easily derived from \cite[Theorem~ 5.1]{refPZ1}
(see Remark~\ref{rem.PZ}).
In particular, we see that the uniqueness takes place for $n=2$. Combined with Theorem~\ref{th.main},
this fact completes the proof of \cite[Theorem 4]{refK}.
\end{remark}

\vbox{\small
\centerline{{\sc Table 1.} ($r$ is defined in \cite[\S5.3]{refPZ2}
  as $\#(\overset{s}{{-\!\!\!-\!}}{\circ}\overset{t}{{\!-\!\!\!-}})$ in \cite[Fig.~15]{refPZ2}.)} 
\smallskip
\centerline{
\vbox{\offinterlineskip
\def\h {height2pt&\omit&&\omit&&\omit&\cr}
\def\o{\omit} \def\t{\times 10} \def\s{\;\;} \def\ss{\s\s} \def\b{\!\!}
\def\1{\bar1}\def\2{\bar2}\def\3{\bar3}\def\4{\bar4}\def\*{${}^*$}
\def\q{\quad\;}
\hrule
\halign{&\vrule#&\strut\;\;\hfil#\hfil\,\;\cr
\h
&  && Series in \cite{refPZ1,refPZ2} && Parameters $s,t,k,l,r$ in \cite{refPZ1,refPZ2} &\cr\h
 \noalign{\hrule}\h\h
& (a), $l_2=0$      && $E_4$ but $k=0$ and $l=0$    && $r=1$; $\; s,t,k,l$ are defined &\cr\h
&                   && is allowed, i.e.~$E_4$       && via the Euclidean division:     &\cr\h
&                   && includes $B_2$ and $E_2$     && $m_2=d_1k+s$, $m_1=d_1l+t$      &\cr\h
 \noalign{\hrule}\h\h
& (b), $l_2=0$      &&        $A$                   && $s=k_1$, $t=1$, $k=l_1$         &\cr\h
 \noalign{\hrule}\h\h
& (b), $\quad d_1=5$, $d_2=3,\quad$ &&    $P$       &&                                 &\cr\h
& $(k_1,k_2;l_1,l_2)=(1,1;2,4)$     &&              &&                                 &\cr\h
\noalign{\hrule}
}}
}
} 


\section{Explicite formulas for $F$ in some particular cases}\label{explicite.formulas}
By an appropriate change of coordinates, explicite formulas for all coverings from Table~1
can be obtained from the formulas for DZ-pairs given in \cite{refPZ2}
(see Remarks \ref{rem.PZ}, \ref{rem.DZ}).

For the 1-st row of Table 1, the mapping $F:(x,y)\mapsto(u,v)$ has the form \cite[\S 5.3]{refPZ2}
$$
  u = J_{l_1}\Big(-\frac{m_1}{d_1},\,-\frac{m_2}{d_1};\,
        \frac{y^{k_2}+x^{k_1}}{y^{k_2}-x^{k_1}}\Big)(y^{k_2}-x^{k_1})^{l_1},
        \qquad v = xy,
$$
where $J_k(a,b;x)$ is the $k$-th Jacobi polynomial.
Using \cite[(5.2)]{refPZ2}, the expression for $u$ can be transformed into
$u=x^{m_2}S(y^{k_2}/x^{k_1}-1)$ where $S(T)$ is the $l_1$-jet of the power series of $(1+T)^{m_2/d_1}$.
In particular, 
$u = (k_1 y^{k_2} + k_2 x^{k_1})/(k_1+k_2)$
for $l_1=1$, i.e. when $F$ has quadratic ramification.

For the 2-nd row of Table 1, the mapping $F$ has the form
$(x,y)\mapsto(u(x,y),y)$ where 
$u'_x=\lambda(x^{k_1}-y^{k_2})^{n-1}$, $u(0,y)=0$, and $u(1,1)=1$,
i.e. $J(F)=\lambda(x^{k_1}-y^{k_2})^{n-1}$. 
In particular, $u = ((k_1+1)y^{k_2} - x^{k_1})x/k_1$ for $l_1=1$, i.e. for $n=2$.

For the 3-rd row of Table 1, the mapping $F:(x,y)\mapsto(u,v)$ has the form
$$
  u = x^3 + 9x y^2 + 9y^3,\qquad v = x^5 + 15 x^3 y^2 + 15 x^2 y^3 + 45 x y^4 + 90 y^5
$$
(see \cite[\S 9.6]{refPZ2}; in this case $\wt C=\{y=0\}$).

In particular, this gives an explicit form of $F$ in all cases for $n=2$ and in a half of cases for $n=3$.
Let us treat the remaining cases for $n=3$.

In Case (а) with $l_1=l_2=1$, the mapping $F:(x,y)\mapsto(u,v)$ has the form
$$
\begin{cases}
    u = A - d_2 B/s,\\ v=xy(A+sB), \end{cases}
    A=\frac{k_2 x^{k_1}+ k_1 y^{k_2}}{k_1+k_2},\quad
    B=\frac{x^{k_1}-y^{k_2}}{k_1+k_2},
$$
where $s = \pm\sqrt{-d_1}$ 
(recall that $d_1=k_1+k_2+k_1 k_2$ and $d_2=k_1 k_2$).

In Case (b) with $l_1=l_2=1$, the mapping $F:(x,y)\mapsto(u,v)$ has the form
$$
\begin{cases}
  u=  x\big(a y^{k_2} + (1-a) x^{k_1}\big),\\
  v=\,y\big(b x^{k_1} + (1-b) y^{k_2}\big),\end{cases}
  a = \frac{(k_1+1)(k_1+s)}{k_1(k_1-k_2)},\quad
  b = \frac{(k_2+1)(k_2+s)}{k_2(k_2-k_1)},
$$
where $s = \pm\sqrt{k_1 k_2}$.


\section{The extra property in Case (b) of Theorem~\ref{th.main} with $(l_1,l_2)=(1,0)$}\label{sect.extra}

Let $f:\C^2\to\C^2$ be the mapping $(x,y)\mapsto(u,v)$, $u=(p+1)xy^q-x^{p+1}$, $v=y$.
Up to constant factor, this is Case (b) of Theorem~\ref{th.main} with parameters
$(k_1,k_2)=(p,q)$, $(l_1,l_2)=(1,0)$, $(d_1,d_2)=(qp+q,p)$, $n=2$, $N=p+1$
(see \S\ref{explicite.formulas}).

In this section we prove that $f$ satisties {\it extra property}
(see the end of \S 0.1 in \cite{refK}), which implies that \cite[Corollary 1]{refK}
holds with $\mathcal G_{2,\mathbb N}$ is replaced by $\mathcal G_{2,\mathcal D}$.

Let us introduce notation similar to that in \cite[\S~0.1]{refK}, namely,
denote the ramification curve $\{x^p=y^q\}$ (the zero set of the Jacobian) by $R$, 
denote its image by $B$, and set $C=f^{-1}(B)\setminus R$. 
Consider the fiber product of two copies of the covering $f$:
\begin{equation*}
\begin{split}
      \C^2\times_f\C^2 & =\{(x_1,y_1;x_2,y_2)\mid f(x_1,y_1)=f(x_2,y_2)\}\\
       &\cong\{(x_1,x_2,y)\mid (p+1)x_1y^q-x_1^{p+1} = (p+1)x_2y^q-x_2^{p+1} \} = W'\cup W'',
\end{split}
\end{equation*}
where $W'=\{x_1=x_2\}$, $W''=\{ \Phi(x_1,x_2)=y^q\}$ and
\begin{equation*}
\begin{split}
   (p+1)\Phi(x_1,x_2) &= \frac{x_1^{p+1}-x_2^{p+1}}{x_1-x_2} 
                = x_1^p + x_1^{p-1}x_2 + \dots + x_1 x_2^{p-1} + x_2^p.
\end{split}
\end{equation*}
Let $g_1,g_2:\C^2\times_f\C^2 \to \C^2$ be the natural projections given by
$g_i(x_1,x_2,y)=(x_i,y)$, $i=1,2$, and
let $\nu:\wt W\to W''$ be a resolution of singularities of $W$. Denote $h_i=g_i\circ\nu$.
The proper transform of $R$ under $h_1$ splits into a disjoint union of curves
$\wb R$ and $\wb C$ such that $h_2(\wb R)=R$ and $h_2(\wb C)=C$.
The divisor $h_1^*(R)$ has the form
\begin{equation}\label{eq.ERC}
   h_1^*(R) = E + \wb R + \wb C, \qquad
    \supp E = h_1^{-1}(0) = h_2^{-1}(0).
\end{equation}
The {\it extra property} of the covering $f$, which we are going to prove, is that
$E$ is a sum of effective divisors $E = E_{\wb R} + E_{\wb C}$ such that
\begin{equation}\label{eq.extra}
(E_{\wb R}+\wb R, E_{\wb C}+\wb C)_{\wt W} \le 2\delta_{(0,0)}(R) + \mult_{(0,0)}(B) - 1,
\end{equation}
where $(\,,\,)_{\wt W}$ the intersection index, $\delta_{(0,0)}(R)$ is the delta-invariant of 
the singularity of $R$, and $\mult_{(0,0)}(B)$ is the multiplicity of $B$ at the origin.

\begin{propos} Let $E_{\wb R}=\frac{1}{p}E$ and $E_{\wb C}=\frac{p-1}{p}E$
(a priori these are divisors with rational coefficients). Then

(a) the equality in \eqref{eq.extra} holds;

(b) $E_{\wb R}$ and $E_{\wb C}$ are divisors with integer coefficients.
\end{propos}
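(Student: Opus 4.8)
The plan is to carry out everything by an explicit resolution of the double point locus $W''$, since the covering $f$ here is completely explicit. First I would analyze the singularities of the surface $W''=\{\Phi(x_1,x_2)=y^q\}$. Along $\{x_1=x_2=0\}$ the equation reads $\Phi(x_1,x_2)=y^q$ with $\Phi$ homogeneous of degree $p$ (up to the factor $1/(p+1)$) in $(x_1,x_2)$, so the singularity at the origin of $W''$ is that of a curve singularity $\{$degree-$p$ form$\}=\{$degree-$q$ form in $y\}$ type; more precisely, since $\GCD(p,q)=1$ (this is the hypothesis $\GCD(k_1,k_2)=1$ with $(k_1,k_2)=(p,q)$), $W''$ has a quasi-homogeneous (cyclic quotient / $A$-type chain after normalization) singularity at $0$, and its resolution $\nu:\wt W\to W''$ is governed by the continued-fraction expansion of $p/q$ exactly as in the toric picture used in Lemma~\ref{lem1}. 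The key point is that $h_1^{-1}(0)=h_2^{-1}(0)=\operatorname{supp}E$ is the exceptional chain of this resolution, a rational linear chain $E=E_1+\dots+E_r$, and on it both $h_1$ and $h_2$ restrict to the \emph{same} toric data up to the symmetry $x_1\leftrightarrow x_2$.

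Next I would compute the three relevant divisor classes on $\wt W$ restricted to this chain: $h_1^*(R)$, $\wb R$, and $\wb C$. By \eqref{eq.ERC}, $h_1^*(R)=E+\wb R+\wb C$, and since $h_1^*(R)\cdot E_j=0$ for every component $E_j$ of the exceptional chain (pull-back of a divisor not passing through $0$ meets the exceptional fibre trivially), we get the linear system $E\cdot E_j=-(\wb R+\wb C)\cdot E_j$ for each $j$. The curves $\wb R$ and $\wb C$ meet the chain $E$ each at exactly one point (on the appropriate end component, from the toric description): $\wb R$ meets the end corresponding to the divisor that maps to the ramification side, $\wb C$ meets the other end. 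So $(\wb R+\wb C)\cdot E_j$ is supported at the two ends of the chain. Solving the linear system $\|-E_i\cdot E_j\| \,(\text{coeff vector}) = (\wb R+\wb C)\cdot E$ recovers $E$ written as a $\Q$-divisor in the $E_j$; with $d(E)=\det\|-E_i\cdot E_j\|$ being a power related to $N=p+1$, the coefficients come out with denominator dividing $p$. Then $E_{\wb R}=\tfrac1p E$ has coefficients that are $\tfrac1p\times(\text{solution})$, and part (b) is the assertion that these are integers — which should follow because the solution vector is itself divisible by $p$ once one notes that $\wb R$ is the full ramification curve of the degree-$N$ (with $N=p+1$) map and $\wb C$ carries the complementary $p$ sheets, so the "$\wb R$-part" of $E$ is exactly $\tfrac1N\cdot(\text{something})$ multiplied back up.

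For part (a), the equality in \eqref{eq.extra}, I would compute the left side $(E_{\wb R}+\wb R,\,E_{\wb C}+\wb C)_{\wt W}=\tfrac1p E\cdot\tfrac{p-1}{p}E + \tfrac1p E\cdot\wb C + \wb R\cdot\tfrac{p-1}{p}E + \wb R\cdot\wb C$, and the right side $2\delta_{(0,0)}(R)+\operatorname{mult}_{(0,0)}(B)-1$. Here $R=\{x^p=y^q\}$ has one characteristic pair $(q,p)$ (ordering aside), so $\delta_{(0,0)}(R)=\tfrac{(p-1)(q-1)}{2}$, and $B=f(R)$ has $\operatorname{mult}_{(0,0)}(B)=\min(d_1,d_2)=\min(q(p+1),\,p)=p$ (using $d_1=qp+q$, $d_2=p$, $p<q(p+1)$), so the right side equals $(p-1)(q-1)+p-1=(p-1)q$. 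The main obstacle — and where the real work lies — is the exact bookkeeping of the self-intersection $E^2$ of the (non-integral, $\Q$-) divisor $\tfrac1p E$ times $\tfrac{p-1}{p}E$ together with the cross terms $E\cdot\wb R$, $E\cdot\wb C$, $\wb R\cdot\wb C$ on the chain, and matching the total against $(p-1)q$. I expect that $E\cdot\wb R = -E\cdot(\text{end comp.})$-type quantities combine with the known value $d(E)$ and with the arithmetic of the continued fraction of $p/q$; the cleanest route is probably to observe that $h_1^*(R)\cdot h_1^*(R)$ restricted near $0$ vanishes (it is a pull-back), expand $(E+\wb R+\wb C)^2=0$ locally, and use that $\wb R^2$, $\wb C^2$ near $0$ are themselves computable from the self-intersections of the end components, thereby reducing \eqref{eq.extra} to an identity in $p,q$ that one checks directly. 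I would first do the smallest cases $(p,q)=(2,3),(2,5),(3,4)$ by hand to pin down the pattern, then give the general continued-fraction argument.
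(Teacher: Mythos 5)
Your overall strategy (resolve $W''$ explicitly, use $h_1^*(R)\cdot E_j=0$ to control $E$, and reduce \eqref{eq.extra} to an identity in $p,q$) is in the right spirit, and your computation of the right-hand side $(p-1)(q-1)+p-1=pq-q$ agrees with the paper. But the structural picture on which your whole computation rests is wrong. Since $\Phi(x_1,x_2)=\frac{1}{p+1}\prod_{k=1}^{p}(x_1-\zeta^k x_2)$ with $\zeta=e^{2\pi i/(p+1)}$, the surface $W''=\{\Phi=y^q\}$ is a weighted-homogeneous singularity of the form $\{\ell_1\cdots\ell_p=y^q\}$ with $p$ distinct linear forms; for $p\ge 3$ its resolution graph is \emph{star-shaped} (a central rational curve $E_0$ with $p$ arms), not a rational linear chain governed by the continued fraction of $p/q$. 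Consequently your key geometric claim — that $\wb R$ and $\wb C$ meet the exceptional set at the two opposite ends of a chain — is false: in the actual resolution both $\wb R$ and $\wb C$ meet the \emph{same} central component $E_0$, transversally, with $(\wb R,E_0)=1$ and $(\wb C,E_0)=p-1$, and meet no other component; moreover $\wb R\cap\wb C=\emptyset$. This is exactly what makes the proportional splitting $E_{\wb R}=\frac1p E$, $E_{\wb C}=\frac{p-1}{p}E$ the natural one; with your "opposite ends" picture the computation of the cross terms $E\cdot\wb R$, $E\cdot\wb C$ would come out wrong. (Also, $h_1^*(R)^2$ does not vanish; what is true, by the projection formula, is $h_1^*(R)\cdot E_j=0$ for each contracted component, which you do use correctly elsewhere.)

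Beyond that, the two decisive steps are left as promissory notes. For (a), once the correct incidence data are in hand, everything reduces to knowing the multiplicity of $E_0$ in $h_1^*(R)$; the paper sidesteps solving any linear system by comparing with the auxiliary divisor $D=\{cx^p=y^q\}$ ($c$ transcendental), whose proper transform meets $E$ exactly as $\wb R+\wb C$ does, and then applying the projection formula: $p(\wb R+E_{\wb R},\wb C+E_{\wb C})=(p-1)(\wb R,h_1^*(D))=(p-1)(R,D)_{\C^2}=(p-1)pq$. Your plan to invert the full intersection matrix would require the complete star-shaped graph with all self-intersections, which you have not determined. For (b), the heuristic "$\wb C$ carries the complementary $p$ sheets, so the coefficients are divisible by $p$" is not an argument; one actually has to write $h_1^*(R)$ in the toric charts (e.g.\ $x_1^p-y^q=\xi_1^{pq}\eta^{ap}(1-\eta)$ in the chart of $\langle e_0,e_2,e_0^+\rangle$) and check divisibility of each exceptional multiplicity by $p$. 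So both halves of the proposition remain unproved as the proposal stands.
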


\begin{proof}
We have $2\delta_{(0,0)}(R)=(p-1)(q-1)$ and $\mult_{(0,0)}(B)=p$, hence the
right hand side in \eqref{eq.extra} is equal to $pq-q$. Let us prove that the left hand side also is equal to $pq-q$.

\begin{figure}

	\centering
		\includegraphics[width=41mm]{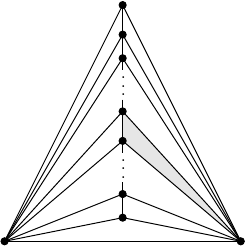}
		\put(-126,2){\small$e_1$}
		\put(1,2){\small$e_2$}
		\put(-70,112){\small$e_3$}
		\put(-70,49){\small$e_0$}
		\put(-70,63){\small$e_0^+$}
	\caption {A plane section of the fan $\Sigma$.}
	\label{fig.fan}
\end{figure}

Let $\tau:X_\Sigma\to\C^3$ be the toric resolution of the singularity of $W''$ (see \cite[\S8.2]{refAVG}),
i.e. the fan $\Sigma$ is a primitive subdivision of the positive octant in $\R^3$ which contains the ray
$\langle e_0\rangle$, where $e_0=(q,q,p)$ is the vector orthogonal to the only 
compact facet of the Newton diagram of $\Phi(x_1,x_2)-y^q$.
For $\Sigma$ we choose the fan which can be described as follows.
Let $X_{\Sigma'}\to\C^2$ be the minimal toric resolution of the singularity of the curve $y^q=x^p$,
as in the proof of Theorem~\ref{th.main}(I),
i.e. $\Sigma'$ is the minimal primitive subdivision of the fan $(e'_1,e'_0,e'_2)$,
where $(e'_1,e'_2)$ is the standard base of $\Z^2$ and $e'_0=qe'_1+pe'_2$.
Then the one-dimensional cones of $\Sigma$ are positive coordinate half-axes
$\langle e_1\rangle,\langle e_2\rangle,\langle e_3\rangle $ and the images of all the
one-dimensional cones of $\Sigma'$, except $\langle e'_1\rangle$, under the embedding
$\iota:\Z^2\to\Z^3$, $(x,y)\mapsto(x,x,y)$. In particular, $e_0=\iota(e'_0)$.
The 2- and 3-dimensional cones of $\Sigma$ are represented in Fig.~\ref{fig.fan} by segments and triangles.
The closure of the two-dimensional orbit of the torus $(\C\setminus 0)^3$ associated with $\langle e\rangle$ will be denoted by $X(e)$.
Let $\wt W'$ be the proper transform of $W'$ (recall that $\wt W$ is the proper transform of $W''$).
The mutual arrangement of the 2D-orbits and the curves which are cut on them by $\wt W$ and $\wt W'$
are drawn schematically in Fig.~\ref{fig.toric}.

Let $e_0^+=(a,a,b)$, where $qb-pa=1$. This is the generator of the one-dimensional cone of $\Sigma$
which is next to $e_0$. Consider coordinates
$(\xi_1,\xi_2,\eta)$ on $X_\Sigma$ associated with
$\langle e_0,e_2,e_0^+\rangle$ (the gray triangle in Fig.~\ref{fig.fan}).
In these coordinates, $X(e_0)$ and $X(e_0^+)$ are given by the equations $\xi_1=0$ and $\eta=0$ respectively
(see Fig.~\ref{fig.toric}).

\begin{figure}

	\centering
		\includegraphics[width=80mm]{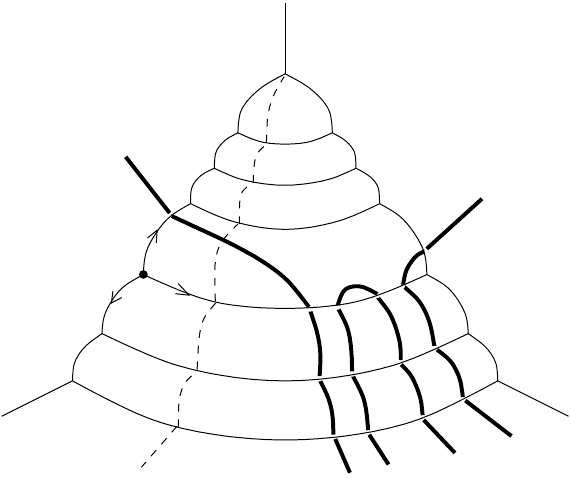}
                \put(-140,50){\small$X(e_0^+)$}
                \put(-110,85){\small$X(e_0)$}
                \put( -30,80){\small$X(e_1)$}
                \put(-220,90){\small$X(e_2)$}
                \put(-145,3){\small$X(e_3)$}
                \put(-193,73){\small$\xi_1$}
                \put(-155,80){\small$\xi_2$}
                \put(-174,97){\small$\eta$}
	\caption 
	{2D-orbits on $X_\Sigma$ and their intersections with $\wt W$ and $\wt W'$.}
	\label{fig.toric}
\end{figure}

The mapping $\tau$ has the form $(\xi_1,\xi_2,\eta)\mapsto(x_1,x_2,y)$,
\hbox{$x_1=\xi_1^q     \eta^a$},
\hbox{$x_2=\xi_1^q\xi_2\eta^a$},
\hbox{$y  =\xi_1^p     \eta^b$},
therefore $\wt W$ and $\wt W'$ are given by the equations $\eta = \varphi(\xi_2)$
and $\xi_2=1$ respectively, where $\varphi(x)=\Phi(1,x)$.
Hence $\wt W\cap X(e_0)$ is a smooth rational curve (denote it by $E_0$) given by the equation $\eta=\varphi(\xi_2)$
in the coordinates $(\xi_2,\eta)$ on $X(e_0)$, and  
$\wt W'\cap X(e_0^+)$ is given by the equation $\varphi(\xi_2)=0$ in the coordinates
$(\xi_1,\xi_2)$ on $X(e_0^+)$, thus it is the union of $p$ vertical lines
$\xi_2=e^{2\pi ik/(p+1)}$, $k=1,\dots,p$.
Analogous computations in other coordinate charts associated with triangles in Fig.~\ref{fig.fan} show
that $\wt W\cap\tau^{-1}(0)$ is as in Fig.~\ref{fig.toric}.

Recall that $R=\{x^p=y^q\}$. Hence the curve
$(\wb R\cup\wb C)\setminus E_0$ is given by the system of equations $(\xi_1^q\eta^a)^p=(\xi_1^p\eta^b)^q$, $\varphi(\xi_2)=\eta$
and inequality $\eta\ne0$ in the coordinates $(\xi_1,\xi_2,\eta)$.
Since $bq=ap+1$, by solving this system, we obtain that $\wb R\cup\wb C$
consists of $p$ components parameterized by $t\mapsto(\xi_1,\xi_2,\eta)=(t,\alpha_k,\varphi(\alpha_k))$ where
$\alpha_1=1,\alpha_2,\dots,\alpha_p$ are roots of $\varphi(x)-1$. 
The curve $\wb R$, which is $\wt W\cap\wt W'$, corresponds to the root $\alpha_1=1$ and the curve $\wb C$
corresponds to the other roots (note that since $\varphi(x)=(x-1)\psi'(x)$, where $\psi(x)=\Phi(x,1)$, 
the mapping $h_2$ is ramified along $\wb C$).

Thus $(\wb R,E_0)_{\wt W}=1$, $(\wb C,E_0)_{\wt W}=p-1$, and $\wb R\cup\wb C$ does not intersect other
components of $E$.
Consider the curve $D=\{cx^p=y^q\}\subset\C^2$ for a transcendent number $c$.
Let $\wb D$ be the proper transform of $D$ under $h_1$.
It is easy to check that the intersection numbers with the components of $E$
are the same for $\wb D$ and for $\wb R+\wb C$. Moreover, $h_1^*(D) = E+\wb D$
(cf.~\eqref{eq.ERC}). Hence 
$$
   p(\wb R+E_{\wb R},\wb C+E_{\wb C})_{\wt W} = (p-1)(\wb R,h_1^*(D))_{\wt W}
     = (p-1)(R,D)_{\C^2} = (p-1)pq,
$$
which proves Statement (a).
Statement (b) can be easily derived from an explicit expression of $h_1^*(R)$
written in coordinate charts associated with three-di\-men\-sion\-al cones of $\Sigma$.
\end{proof}


\end{document}